\newtheorem{example}[theorem]{Example}
\newcommand{\reals}{\mathbb{R}}
\begin{document}



\bibliographystyle{plain}
\title{The eigen-structures of real (skew) circulant matrices with some applications}

\author{
Zhongyun\ Liu\thanks{School of Mathematics and Statistics,
Changsha University of Science and Technology, Changsha 410076, P.
R. China (liuzhongyun@263.net,  mathlife@sina.cn, 649527704@qq.com). } \and Siheng\ Chen\footnotemark[1] \and Weijin\ Xu\footnotemark[1]\and Yulin\ Zhang\thanks{Centro de Matem\'atica,
Universidade do Minho, 4710-057 Braga, Portugal (zhang@math.uminho.pt).}
}

\pagestyle{myheadings} \markboth{Z. Y. \ Liu, et al.}{The real eigen-structure}
\maketitle

\begin{abstract}
The circulant matrices and skew-circulant matrices are two special classes of Toeplitz matrices and play vital roles in the computation of Toeplitz matrices.
In this paper, we focus on real circulant and skew-circulant matrices. We first investigate their real Schur forms, which are closely related to the family of discrete cosine transform (DCT) and discrete sine transform (DST). Using those real Schur forms, we then develop some fast algorithms for computing real circulant, skew-circulant and Toeplitz matrix-real vector multiplications. Also, we develop a DCT-DST version of circulant and skew-circulant splitting (CSCS) iteration for real positive definite Toeplitz systems. Compared with the fast Fourier transform (FFT) version of CSCS iteration, the DCT-DST version is more efficient and saves a half storage. Numerical experiments are presented to illustrate the effectiveness of our method.

\end{abstract}

\begin{keywords}
Real Schur form, real circulant matrices, real skew-circulant matrices, real Toeplitz matrices, CSCS iteration.
\end{keywords}
\begin{AMS}
15A23, 65F10, 65F15.
\end{AMS}

\vskip 20pt

%
%

\section{\normalsize\bf Introduction}
Recall that a matrix $T= ( t_{jk})_{j, k=0}^{n-1}$ is said to be \emph{Toeplitz} if $ t_{jk} = t_{j-k}$; a matrix $C= ( c_{jk})_{j, k=0}^{n-1}$ is said to be \emph{circulant} if $ c_{jk} = c_{j-k}$ and $c_{-l}= c_{n-l}$ for $1\le l\le n-1$; and a matrix $S= ( s_{jk})_{j, k=0}^{n-1}$ is said to be \emph{skew-circulant} if $ s_{jk} = s_{j-k}$ and $s_{-k}= -s_{n-k}$ for $1\le l\le n-1$.

Toeplitz matrices arise in a variety of applications in mathematics, scientific computing and engineering, for instance, signal processing, algebraic differential equation, time series and control theory, see e.g. \cite{ChanNg96} and a large literature therein. Those applications have motivated both mathematicians and engineers to develop specific algorithms for solving Toeplitz systems for instance \cite{ChanNg96, GolVL96, Daniel2009} and references therein.

The discrete Fourier transform (DFT) matrix $F=(F_{jk})$ is defined by
\begin{equation}\label{Fourier}
F_{jk}=\frac{1}{\sqrt{n}}\omega^{-jk},\ \ j,\ k = 0, 1, \cdots, n-1, \ \mbox{\rm where} \ \omega=\exp(\frac{2\pi}{n}i),\ i=\sqrt{-1}.
\end{equation}

It is known that any circulant matrix $C$ and skew-circulant matrix $S$ possess the following Schur canonical forms \cite{ChanNg96, Ng03, Moody2003}, respectively,
\begin{equation}\label{schur}
   C = F\Lambda F^* ~~~ and ~~~ S = \tilde{F} \tilde{\Lambda} \tilde{F}^*,
\end{equation}
where $\tilde F=DF^*$ is an unitary matrix with $D={\rm diag} \big(1,e^{\frac{\pi}{n}i},\cdots,e^{\frac{(n-1)\pi}{n}i}\big)$, $ \Lambda $ and $ \tilde{\Lambda} $ are diagonal matrices, holding the eigenvalues of $ C $ and $ S $ respectively. Moreover, $ \Lambda $ and $ \tilde{\Lambda}$ can be obtained in $O(n\log n)$ operations by using two FFTs of the first rows of $C$ and $S$, respectively.

Due to the Schur canonical forms (\ref{schur}) of $C$ and $S$, the products $ C\boldsymbol{x}$ or $ S\boldsymbol{x} $ for any vector $\boldsymbol{x}$ can be computed by 3FFTs ( $1$ FFT for computing eigenvalues) in $O(n\log n)$ operations.

Very often, circulant and skew circulant matrices are used to deal with Toeplitz issues. An important property is that a Toeplitz matrix $ T $ can be split  into the following circulant and skew-circulant splitting (CSCS)\cite{Ng03}
\begin{equation}\label{CSsplitting}
 T = C + S
\end{equation}
where $C=(c_{jk})$ is a circulant matrix  and $S=(s_{jk})$ is a skew-circulant matrix, which are defined as follows.
$$
 \begin{array}{lll}
  c_{jk}=
  \left\{
  \begin{array}{cc}
   \frac{1}{2}t_0,                             & {\mbox{\rm if}} \  j=k,   \\[0.2cm]
    \frac{(t_{j-k}+t_{j-k-n})}{2} ,                 & {\mbox{\rm otherwise}},
  \end{array}
  \right. &
\mbox{\rm and} &
  s_{jk}=
  \left\{
  \begin{array}{cc}
   \frac{1}{2}t_0,  & {\mbox{\rm if}} \ j=k,\\[0.2cm]
   \frac{ t_{j-k}-t_{j-k-n}}{2} ,                & {\mbox{\rm otherwise}}.
  \end{array}
  \right.
\end{array}
$$

Actually, due to $T\boldsymbol{x}= C\boldsymbol{x} + S\boldsymbol{x}$, $T\boldsymbol{x}$ can be computed by $6$ FFTs of $n$-vector.
Also, any linear system of equations $C\boldsymbol{x} = \boldsymbol{b}$ ($S\boldsymbol{x} = \boldsymbol{b}$) that contains circulant matrices (skew-circulant matrices) may be quickly solved by using the FFT. However, all operations, due to FFTs, are involved into complex arithmetics, even if $C$ ($S$) and $\boldsymbol{b}$ are real. Now, one may ask when $C$ and $S$ are real, could we find  an analogue of (\ref{schur}) for $C$ and $S$ to avoid complex arithmetics in matrix-vector multiplication? and/or when $C$ ($S$) and $\boldsymbol{b}$ are real, could we develop an algorithm which only involves  real arithmetics for solving $C\boldsymbol{x} = \boldsymbol{b}$ ( $S\boldsymbol{x} = \boldsymbol{b}$)? This is the main motivation of this paper.

The organization of this paper is as follows. In the next section, by exploring the  eigenstructures of $C$ and $S$,  we will give the real Schur forms of the circulant matrix $C$ and the skew-circulant matrix $S$. In Sections 3 and 4, with the real schur forms,  we will develop a real method to fast calculate Toeplitz matrix-vector multiplication and an algorithm based on the CSCS iteration in \cite{Ng03} to solve $T\boldsymbol{x} = \boldsymbol{b}$ by real arithmetics.  Numerical experiments are presented in Section 5 to show the effectiveness of our method.
A brief conclusion and the acknowledgements are finally followed.

%
%
%
\section{\normalsize\bf The Real Schur Forms of Real (Skew) Circulant Matrices}
In the section, making use of the eigen-structures of a real circulant matrix $C$ and a real skew-circulant matrix $S$, we develop their corresponding real Schur forms.

\subsection{\normalsize\bf Preliminaries}

Let's begin with some basic definitions.
For convenience, throughout the paper, we define $J_n$ the permutation matrix of order $n$ with ones on the cross diagonal (bottom left to top right) and zeros elsewhere, and $P_{pq}$ is the $(q-p)\times n$ restriction matrix satisfying $P_{pq}\left (x_{j}\right)_{j=0}^{n-1}=\left(x_{j}\right )_{j=p}^{q-1},\ \ q>p.$
 \begin{definition}{\rm \cite{Heinig1998}}\label{vectorsym}
 A vector $\boldsymbol{x }\in \reals^n $ is said to be symmetric if $J_n \boldsymbol{x} = \boldsymbol{x} $ and skew-symmetric if $J_n \boldsymbol{x} = -\boldsymbol{x} $.
 \end{definition}
\par  Now, let's recall the definitions of DCTs and DSTs. The family of discrete trigonometric transforms consists of $8$ versions of DCTs and corresponding $8$ versions of DSTs \cite{Wang1985, Strang1999, Rao1990}. In this paper, we only need four versions of them which will be used in the sequel.

\begin{definition}\label{DCT}
The{\rm{ DCT-I, DCT-II, DCT-V and DCT-VI}} matrices are defined as follows.
\begin{align*}
\mathscr{C}_{n+1}^{\rm I}&= \sqrt{\frac{n}{2}}\left [\tau_j\tau_k\cos\frac{jk\pi}{n}\right ]_{j, k=0}^{n},\
&\mathscr{C}_n^{\rm V}& = \frac{2}{\sqrt{2n-1}}\left [\tau_j\tau_k\cos\frac{2jk\pi}{2n-1}\right ]_{j, k=0}^{n-1},\\[0.2cm]
\mathscr{C}_n^{\rm II}&= \sqrt{\frac{n}{2}}\left [\tau_j\cos\frac{j(2k+1)\pi}{2n}\right ]_{j, k=0}^{n-1},
&\mathscr{C}_n^{\rm VI}&= \frac{2}{\sqrt{2n-1}}\left [\tau_j\iota_k\cos\frac{j(2k+1)\pi}{2n-1}\right ]_{j, k=0}^{n-1},
\end{align*}
where
$$
 \begin{array}{lll}
  \tau_{l(l=j,\ k)}=
  \left\{
  \begin{array}{lllll}
    1 ,                 & {\mbox{\rm if}}  & l \neq 0  &{\mbox{\rm and}} & l\neq n \\
    \frac{1}{\sqrt{2}}, & {\mbox{\rm if}}  & l = 0     &{\mbox{\rm or}}  & l = n,
  \end{array}
  \right. &
\mbox{\rm and} &
  \iota_k=
  \left\{
  \begin{array}{lll}
    1 ,                & {\mbox{\rm if}} &k \neq n-1  \\
  \frac{1}{\sqrt{2}},  & {\mbox{\rm if}} & k = n-1.
  \end{array}
  \right.
\end{array}
$$
\end{definition}

\begin{definition}
The {\rm{DST-I, DST-II, DST-V and DST-VI}} matrices are defined as follows.
\begin{align*}
\mathscr{S}_{n-1}^{\rm I} &= \sqrt{\frac{2}{n}}\left [\sin\frac{jk\pi}{n}\right ]_{j, k=1}^{n-1},
&\mathscr{S}_{n-1}^{\rm V}&= \frac{2}{\sqrt{2n-1}}\left [\sin\frac{2jk\pi}{2n-1}\right ]_{j, k=1}^{n-1},\\[0.2cm]
\mathscr{S}_n^{\rm II}&= \sqrt{\frac{2}{n}}\left [\tau_j\sin\frac{j(2k-1)\pi}{2n}\right ]_{j, k=1}^{n},
&\mathscr{S}_{n-1}^{\rm VI}&= \frac{2}{\sqrt{2n-1}}\left [\sin\frac{j(2k-1)\pi}{2n-1}\right ]_{j, k=1}^{n-1},
\end{align*}
where $\tau_j$ is defined as in Definition {\rm\ref{DCT}}.
\end{definition}

Note that all those transform matrices are all orthogonal.
\subsection{\normalsize\bf Real Circulant Matrices}\label{real cir}
Let us first to investigate the real eigen-structure of a real circulant matrix $C$.
It is shown in \cite{Moody2003, Karner2003} that the eigenvalues of a real circulant matrix can be arranged in the following order
\begin{enumerate}
\item $\boldsymbol{\lambda} = [\lambda_0, \lambda_1, \cdots, \lambda_{m-1}, \lambda_m, \bar{\lambda}_{m-1}, \cdots, \bar{\lambda}_1]^T$, where $\lambda_0, \lambda_m \in \reals$ and $n=2m$,
    \vskip5pt
 \item $\boldsymbol{\lambda} = [\lambda_0, \lambda_1, \cdots, \lambda_m, \bar{\lambda}_m, \cdots, \bar{\lambda}_1]^T$, where $ \lambda_0 \in \reals $ and $n=2m+1$.
\end{enumerate}

Partitioning $ F^*=[\ \boldsymbol{f}_0, \cdots,\boldsymbol{f}_{n-1}]$, we have $\boldsymbol{f}_{n-k}=\bar{\boldsymbol{f}_k}$. For any eigenvalue $\lambda_k$, $C\boldsymbol{f}_k=\lambda_k \boldsymbol{f}_k$ means
$  C(\boldsymbol{f}_k+\bar{\boldsymbol{f}}_k)=\lambda_k\boldsymbol{f}_k+\bar{\lambda}_k\bar{\boldsymbol{f}}_k\  {\rm and}\
  C(\boldsymbol{f}_k-\bar{\boldsymbol{f}}_k)=\lambda_k\boldsymbol{f}_k-\bar{\lambda}_k\bar{\boldsymbol{f}}_k.$

If we denote $\lambda_k = \alpha_k+i\beta_k$  and $\boldsymbol{f}_k = \hat{\boldsymbol{c}}_k + i\hat{\boldsymbol{s}}_k$,
 then we have

\begin{center} $C[\ \hat{\boldsymbol{c}}_k,\ \hat{\boldsymbol{s}}_k\ ]
=[\ \hat{\boldsymbol{c}}_k,\ \hat{\boldsymbol{s}}_k\ ]
   \left[\begin{array}{cc}
  \alpha_k & \beta_k \\
  -\beta_k & \alpha_k
  \end{array} \right],$\end{center}

 \noindent where $\alpha_k$ and $\beta_k$, $\hat{\boldsymbol{c}}_k$ and $\hat{\boldsymbol{s}}_k$ are the real and pure imaginary parts of $\lambda_k$ and $\boldsymbol{f}_k$, for $k=0, \cdots,n-1$.

Now, we show how to construct an orthogonal matrix $U$ which transforms $C$ into its real Schur form. Notice that $\hat{\boldsymbol{c}}_k=\hat{\boldsymbol{c}}_{n-k}$ and $\hat{\boldsymbol{s}}_k=-\hat{\boldsymbol{s}}_{n-k}$, for $k=0, \cdots,n-1$, so we need only normalize the first half of the vectors $\hat{\boldsymbol{c}}_k$ and $\hat{\boldsymbol{s}}_k$ to get an orthogonal $U$. Namely, $U$ can be chosen as follows,
\begingroup
\renewcommand*{\arraystretch}{1.2}
\begin{equation}\label{thisU}
U= \left\{\begin{array}{ll}
  \left[\ \hat{\boldsymbol{c}}_0, \sqrt{2}\ \hat{\boldsymbol{c}}_1, \cdots, \sqrt{2}\ \hat{\boldsymbol{c}}_{m-1}, \hat{\boldsymbol{c}}_m ,\sqrt{2}\ \hat{\boldsymbol{s}}_{m-1}, \cdots,
\sqrt{2}\ \hat{\boldsymbol{s}}_1  \right],
 & n=2m,\\
  \left[\ \hat{\boldsymbol{c}}_0, \sqrt{2}\ \hat{\boldsymbol{c}}_1, \cdots, \sqrt{2}\ \hat{\boldsymbol{c}}_m, \sqrt{2}\ \hat{\boldsymbol{s}}_m, \cdots,\sqrt{2}\ \hat{\boldsymbol{s}}_1\right],
 & n=2m+1.
 \end{array} \right.
\end{equation}
\endgroup
\indent  A straightforward calculation shows that $U^TCU\equiv\Omega$ is real and has the following structure:
\begingroup
 \begin{equation}\label{cirlschur1}
\Omega_{2m}= \left[
  \begin{array}{c|ccc|c|ccc}
  \alpha_0&        &        &              &        &            &       &         \\ \hline
          &\alpha_1&        &              &        &            &       &\beta_1  \\
          &        & \ddots &              &        &            &\iddots&         \\
          &        &        & \alpha_{m-1} &        &\beta_{m-1} &       &         \\ \hline
          &        &        &              &\alpha_m&            &       &         \\ \hline
          &        &        & -\beta_{m-1} &        &\alpha_{m-1}&       &         \\
          &        & \iddots&              &        &            & \ddots&         \\
          &-\beta_1&        &              &        &            &       &\alpha_1
       \end{array}
   \right]
\end{equation}
\endgroup
or
\begingroup
 \begin{equation}\label{cirlschur2}
  \Omega_{2m+1}= \left[
\begin{array}{c|ccc|ccc}
\alpha_0&         &       &         &         &        &        \\ \hline
        &\alpha_1 &       &         &         &        &\beta_1 \\
        &         &\ddots &         &         &\iddots &        \\
        &         &       &\alpha_m & \beta_m &        &        \\ \hline
        &         &       &-\beta_m & \alpha_m&        &        \\
        &         &\iddots&         &         & \ddots &        \\
        &-\beta_1 &       &         &         &        &\alpha_1
         \end{array}
  \right],
 \end{equation}
\endgroup
which can be transformed into the {\it real Schur canonical form} by a permutation. Therefore we also refer to (\ref{cirlschur1}) or (\ref{cirlschur2}) as the {\it real Schur form} of $C$.
This leads to the following theorem.
\begin{theorem}[\bf{Real Schur form of real circulant matrices}]\label{realform1}
Let $U$ be defined as in {\rm{(\ref{thisU})}}. If the circulant matrix $C$ is real, then $ U^T CU = \Omega $ is the real Schur form of $C$.
\end{theorem}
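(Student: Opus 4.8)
\section*{Proof proposal}

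The plan is to verify two things: that the matrix $U$ in (\ref{thisU}) is orthogonal, and that $U^{T}CU$ has exactly the block pattern displayed in (\ref{cirlschur1})--(\ref{cirlschur2}), which is a permutation away from real Schur canonical form since its diagonal blocks are $1\times 1$ or $2\times 2$ and carry the eigenvalues of $C$.

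For orthogonality I would start from the unitarity of $F$: the columns $\boldsymbol{f}_0,\dots,\boldsymbol{f}_{n-1}$ of $F^{*}$ satisfy $\boldsymbol{f}_j^{*}\boldsymbol{f}_k=\delta_{jk}$, together with the conjugation relation $\boldsymbol{f}_{n-k}=\overline{\boldsymbol{f}_k}$ already noted above. Substituting $\boldsymbol{f}_k=\hat{\boldsymbol{c}}_k+i\hat{\boldsymbol{s}}_k$, hence $\boldsymbol{f}_{n-k}=\hat{\boldsymbol{c}}_k-i\hat{\boldsymbol{s}}_k$, into the pair of identities $\boldsymbol{f}_j^{*}\boldsymbol{f}_k=\delta_{jk}$ and $\boldsymbol{f}_{n-j}^{*}\boldsymbol{f}_k=\delta_{n-j,k}$ and separating real and imaginary parts, one finds for the relevant $j\ne k$ that $\hat{\boldsymbol{c}}_j,\hat{\boldsymbol{s}}_j,\hat{\boldsymbol{c}}_k,\hat{\boldsymbol{s}}_k$ are mutually orthogonal, and for $j=k$ that $\hat{\boldsymbol{c}}_k^{T}\hat{\boldsymbol{s}}_k=0$ with $\|\hat{\boldsymbol{c}}_k\|^{2}=\|\hat{\boldsymbol{s}}_k\|^{2}=\tfrac12$ whenever $0<k<n/2$. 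The two exceptional indices are $k=0$, where $\boldsymbol{f}_0=\tfrac{1}{\sqrt n}(1,\dots,1)^{T}$ is real so $\hat{\boldsymbol{s}}_0=0$ and $\|\hat{\boldsymbol{c}}_0\|=1$, and, when $n=2m$, $k=m$, where $\boldsymbol{f}_m=\tfrac{1}{\sqrt n}(1,-1,1,-1,\dots)^{T}$ is real so $\hat{\boldsymbol{s}}_m=0$ and $\|\hat{\boldsymbol{c}}_m\|=1$. This is precisely why in (\ref{thisU}) the columns indexed $1,\dots,m-1$ and their sine partners carry the factor $\sqrt 2$ while $\hat{\boldsymbol{c}}_0$ and $\hat{\boldsymbol{c}}_m$ do not; a column count then confirms that $U$ is $n\times n$ with orthonormal columns, hence orthogonal.

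For the block form I would invoke the $2\times 2$ intertwining identity already derived above, which is equivalent to $C\hat{\boldsymbol{c}}_k=\alpha_k\hat{\boldsymbol{c}}_k-\beta_k\hat{\boldsymbol{s}}_k$ and $C\hat{\boldsymbol{s}}_k=\beta_k\hat{\boldsymbol{c}}_k+\alpha_k\hat{\boldsymbol{s}}_k$; the scalar $\sqrt 2$ in front of each column cancels, so these relations hold verbatim for the normalized columns of $U$. Reading off $CU$ column by column against the ordering fixed in (\ref{thisU}) then puts $\alpha_k$ on the diagonal and $+\beta_k$, $-\beta_k$ in the two mirror-image off-diagonal positions coupling the $\hat{\boldsymbol{c}}_k$-column with the $\hat{\boldsymbol{s}}_k$-column, which is exactly the antidiagonal $\beta$-pattern of (\ref{cirlschur1}); at $k=0$, and at $k=m$ when $n=2m$, the identity degenerates to $C\hat{\boldsymbol{c}}_k=\alpha_k\hat{\boldsymbol{c}}_k$ because $\hat{\boldsymbol{s}}_k=0$, producing the isolated $1\times 1$ blocks $\alpha_0$ and $\alpha_m$. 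The odd case $n=2m+1$ is identical, with no $\alpha_m$ block and the coupled indices running $1,\dots,m$, and yields (\ref{cirlschur2}); reality of $\Omega$ is automatic since every entry is some $\alpha_k$ or $\pm\beta_k$. The only real work is the index bookkeeping --- checking that the $\beta_k$ entry lands in position $(k+1,\,n-k+1)$ as displayed and treating the two degenerate indices correctly --- but I do not expect any genuine obstacle beyond that.
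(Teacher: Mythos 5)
Your proposal is correct and follows essentially the same route as the paper, which simply appeals to ``the above analysis'': you use the same intertwining identity $C[\hat{\boldsymbol{c}}_k,\hat{\boldsymbol{s}}_k]=[\hat{\boldsymbol{c}}_k,\hat{\boldsymbol{s}}_k]\left[\begin{smallmatrix}\alpha_k&\beta_k\\-\beta_k&\alpha_k\end{smallmatrix}\right]$ and the same normalization of the first half of the $\hat{\boldsymbol{c}}_k,\hat{\boldsymbol{s}}_k$. The only difference is that you actually carry out the orthogonality and norm computations ($\|\hat{\boldsymbol{c}}_k\|^2=\|\hat{\boldsymbol{s}}_k\|^2=\tfrac12$ for the non-degenerate indices, the degenerate cases $k=0$ and $k=m$) that the paper leaves as a ``straightforward calculation,'' and your index bookkeeping matches the displayed forms (\ref{cirlschur1})--(\ref{cirlschur2}).
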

\begin{proof}
The proof can be  directly given by the above analysis and thus omitted.
\end{proof}
\subsection{\normalsize\bf Real Skew-Circulant Matrices}
Similarly, the eigenvalues of a real skew-circulant matrix $S$ can be arranged in the following order
\begin{enumerate}
 \item $\tilde{\boldsymbol{\lambda}} = [\tilde{\lambda}_0, \cdots, \tilde{\lambda}_{m-1}, \bar{\tilde{\lambda}}_{m-1}, \cdots, \bar{\tilde{\lambda}}_0]^T$, \ where\  $n=2m $,
 \vskip5pt
 \item $\tilde{\boldsymbol{\lambda}} = [\tilde{\lambda}_0, \cdots, \tilde{\lambda}_{m-1},\tilde{\lambda}_m, \bar{\tilde{\lambda}}_{m-1}, \cdots, \bar{\tilde{\lambda}}_0]^T$, \ where\ $ \tilde{\lambda}_m \in \reals $ \ and \ $n=2m+1$.
 \end{enumerate}

The next procedure is very much like section \ref{real cir}.
Let $\tilde{F}^*=[\ \tilde{\boldsymbol{f}}_0, \cdots, \tilde{\boldsymbol{f}}_{n-1}]$. Analogously, denoting $\tilde{\lambda}_k=\tilde{\alpha}_k+i\tilde{\beta}_k$ and $\tilde{\boldsymbol{f}}_k=\tilde{\boldsymbol{c}}_k+i\tilde{\boldsymbol{s}}_k$, then we have
$  S[\ \tilde{\boldsymbol{c}}_k,\ \tilde{\boldsymbol{s}}_k\ ]
  =[\ \tilde{\boldsymbol{c}}_k,\ \tilde{\boldsymbol{s}}_k\ ]
  \left[\begin{array}{cc}
  \tilde{\alpha}_k & \tilde{\beta}_k \\
  -\tilde{\beta}_k & \tilde{\alpha}_k
  \end{array} \right].$

Due to $\tilde{\boldsymbol{c}}_k=\tilde{\boldsymbol{c}}_{n-k-1} $ and $\tilde{\boldsymbol{s}}_k=-\tilde{\boldsymbol{s}}_{n-k-1}$, for $k = 0,  \cdots, n-1$,
we can choose an orthogonal matrix $\tilde{U}$ as follows,
\begingroup
\renewcommand*{\arraystretch}{1.2}
\begin{equation}\label{thisV}
\tilde{U}=
   \left\{  \begin{array}{ll}
   \left[\ \sqrt{2}\ \tilde{\boldsymbol{c}}_0, \cdots,\sqrt{2}\ \tilde{\boldsymbol{c}}_{m-1},
  \sqrt{2}\ \tilde{\boldsymbol{s}}_{m-1}, \cdots,\sqrt{2}\ \tilde{\boldsymbol{s}}_0  \right ],
 & {\mbox{\rm if}} \ \ n=2m,\\
  \left[\ \sqrt{2}\ \tilde{\boldsymbol{c}}_0,  \cdots,\sqrt{2}\ \tilde{\boldsymbol{c}}_{m-1}, \tilde{\boldsymbol{c}}_m,
 \sqrt{2}\ \tilde{\boldsymbol{s}}_{m-1},  \cdots,\sqrt{2}\ \tilde{\boldsymbol{s}}_0  \right ],
 & {\mbox{\rm if}} \ \  n=2m+1,
 \end{array}
 \right.
\end{equation}
\endgroup
which makes $\tilde{U}^TS\tilde{U}\equiv\Sigma $ being of the following structure.
 \begingroup
 \begin{equation}\label{skewschur1}
 \Sigma_{2m}=
 \left[
     \begin{array}{ccc|ccc}
 \tilde{\alpha}_0&       &                    &                   &        &\tilde{\beta}_0   \\
                 &\ddots &                    &                   &\iddots &       \\
                 &       &\tilde{\alpha}_{m-1}&\tilde{\beta}_{m-1}&        &      \\ \hline
                 &       &-\tilde{\beta}_{m-1}&\tilde{\alpha}_{m-1}&       &       \\
                 &\iddots&                    &                   & \ddots &       \\
 -\tilde{\beta}_0&       &                    &                   &        & \tilde{\alpha}_0
      \end{array}
      \right]
  \end{equation}
  \endgroup
 or
 \begingroup
 \begin{equation}\label{skewschur2}
      \Sigma_{2m+1}=
      \left[
      \begin{array}{ccc|c|ccc}
 \tilde{\alpha}_0&&&&&&\tilde{\beta}_0\\
&\ddots&&&&\iddots&\\
&&\tilde{\alpha}_{m-1}&&\tilde{\beta}_{m-1}&&\\\hline
&&&\tilde{\alpha}_m&&&\\\hline
&&-\tilde{\beta}_{m-1}&&\tilde{\alpha}_{m-1}&&\\
&\iddots&&&&\ddots&\\
-\tilde{\beta}_0&&&&&&\tilde{\alpha}_0
         \end{array}
         \right].
 \end{equation}
\endgroup
Similarly, we refer to (\ref{skewschur1}) or (\ref{skewschur2}) as the {\it real Schur form} of $S$.
Based on the above analysis, we conclude the following theorem.

\begin{theorem}[\textbf{Real Schur form of real skew-circulant matrices}]\label{realform2}
Let $\tilde{U}$ be defined as in {\rm{(\ref{thisV})}}. If the skew-circulant matrix $S$ is real, then $\tilde{U}^T S\tilde{U} = \Sigma$ is the real Schur form of $S$.
\end{theorem}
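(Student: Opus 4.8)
The plan is to retrace the argument of Section~\ref{real cir} almost line by line, now starting from the skew-circulant Schur decomposition $S=\tilde F\tilde\Lambda\tilde F^{*}$ of~(\ref{schur}) with $\tilde F=DF^{*}$ and $D=\mathrm{diag}\bigl(1,e^{i\pi/n},\dots,e^{i(n-1)\pi/n}\bigr)$. The first step is to record the conjugation symmetry of the eigenpairs: a direct computation with the entries of $\tilde F=DF^{*}$ (the relevant effect of $D$ is that $e^{i\pi/n}$ is a primitive $2n$-th root of unity, which shifts the reflection index from $n-k$ in the circulant case to $n-1-k$ here, since $e^{-2\pi i j}=1$) gives $\tilde{\boldsymbol f}_{\,n-1-k}=\overline{\tilde{\boldsymbol f}_k}$ and hence $\tilde\lambda_{\,n-1-k}=\overline{\tilde\lambda_k}$; this is exactly what produces the eigenvalue orderings~(1)--(2) stated above. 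The reflection $k\mapsto n-1-k$ has a fixed point, $k=m$, precisely when $n=2m+1$, and then $\tilde{\boldsymbol f}_m=\overline{\tilde{\boldsymbol f}_m}$ is real with real eigenvalue $\tilde\lambda_m=\tilde\alpha_m$; when $n=2m$ there is no fixed point and no real eigenvalue, which accounts for the single $1\times1$ block in $\Sigma_{2m+1}$ and its absence in $\Sigma_{2m}$.

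Writing $\tilde{\boldsymbol f}_k=\tilde{\boldsymbol c}_k+i\tilde{\boldsymbol s}_k$, $\tilde\lambda_k=\tilde\alpha_k+i\tilde\beta_k$ and taking real and imaginary parts of $S\tilde{\boldsymbol f}_k=\tilde\lambda_k\tilde{\boldsymbol f}_k$ produces the $2\times2$ relation $S[\tilde{\boldsymbol c}_k,\tilde{\boldsymbol s}_k]=[\tilde{\boldsymbol c}_k,\tilde{\boldsymbol s}_k]\bigl[\begin{smallmatrix}\tilde\alpha_k&\tilde\beta_k\\-\tilde\beta_k&\tilde\alpha_k\end{smallmatrix}\bigr]$ displayed before the statement, while $\tilde{\boldsymbol f}_{\,n-1-k}=\overline{\tilde{\boldsymbol f}_k}$ forces $\tilde{\boldsymbol c}_{\,n-1-k}=\tilde{\boldsymbol c}_k$ and $\tilde{\boldsymbol s}_{\,n-1-k}=-\tilde{\boldsymbol s}_k$. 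Hence only $n$ of the $2n$ real vectors $\{\tilde{\boldsymbol c}_k,\tilde{\boldsymbol s}_k\}$ are distinct; arranging the retained ones in the order specified by $\tilde U$ in~(\ref{thisV})---so that the cosine vector of index $k$ occupies column $k$ and the sine vector of index $k$ occupies column $n-1-k$---and collecting the $2\times2$ relations, the eigenvector equation becomes $S\tilde U=\tilde U\Sigma$ with $\Sigma_{kk}=\Sigma_{\,n-1-k,\,n-1-k}=\tilde\alpha_k$, $\Sigma_{k,\,n-1-k}=\tilde\beta_k$, $\Sigma_{\,n-1-k,k}=-\tilde\beta_k$ for $k=0,\dots,m-1$ (and $\Sigma_{mm}=\tilde\alpha_m$ in the odd case), which is precisely the anti-diagonal block pattern of~(\ref{skewschur1}) or~(\ref{skewschur2}). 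A permutation bringing each $\{k,n-1-k\}$ pair of columns together converts $\Sigma$ into a block upper triangular matrix with $1\times1$ and $2\times2$ diagonal blocks, i.e.\ the real Schur canonical form, which justifies calling $\Sigma$ the real Schur form of $S$.

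The one genuinely substantive point---and the step I expect to be the main obstacle---is verifying that $\tilde U$ is orthogonal, since this is where the reflection $k\mapsto n-1-k$ and the twist in $D$ must be handled carefully. Since $\tilde F$ is unitary, $\tilde{\boldsymbol f}_j^{*}\tilde{\boldsymbol f}_k=\delta_{jk}$; combined with $\tilde{\boldsymbol f}_{\,n-1-k}=\overline{\tilde{\boldsymbol f}_k}$ this also gives $\tilde{\boldsymbol f}_j^{T}\tilde{\boldsymbol f}_k=\overline{\tilde{\boldsymbol f}_j^{*}\tilde{\boldsymbol f}_{\,n-1-k}}=\delta_{j,\,n-1-k}$. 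Taking real and imaginary parts of these two scalar identities over the retained index set yields, for $j\neq k$, $\tilde{\boldsymbol c}_j^{T}\tilde{\boldsymbol c}_k=\tilde{\boldsymbol s}_j^{T}\tilde{\boldsymbol s}_k=\tilde{\boldsymbol c}_j^{T}\tilde{\boldsymbol s}_k=0$, together with $\tilde{\boldsymbol c}_k^{T}\tilde{\boldsymbol s}_k=0$ and $\|\tilde{\boldsymbol c}_k\|^2=\|\tilde{\boldsymbol s}_k\|^2=\tfrac12$ for $0\le k\le m-1$ (and $\|\tilde{\boldsymbol c}_m\|=1$, $\tilde{\boldsymbol s}_m=\boldsymbol 0$ in the odd case, its eigenvector being real); the $\sqrt2$ scalings in~(\ref{thisV}) are exactly what turns each retained vector into a unit vector. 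Therefore $\tilde U^{T}\tilde U=I$, $\tilde U$ is orthogonal, and $\tilde U^{T}S\tilde U=\Sigma$, the rest of the argument being a transcription of the circulant case treated in Theorem~\ref{realform1}.
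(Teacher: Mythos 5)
Your proposal is correct and follows essentially the same route as the paper, which simply appeals to the preceding analysis (the conjugation symmetry $\tilde{\boldsymbol f}_{n-1-k}=\overline{\tilde{\boldsymbol f}_k}$, the $2\times2$ real block relation, and the choice of $\tilde U$ in (\ref{thisV})) and omits the details. The only material you add is the explicit verification that $\tilde U$ is orthogonal via $\tilde{\boldsymbol f}_j^{*}\tilde{\boldsymbol f}_k=\delta_{jk}$ and $\tilde{\boldsymbol f}_j^{T}\tilde{\boldsymbol f}_k=\delta_{j,\,n-1-k}$, which is a correct and welcome filling-in of a step the paper leaves implicit.
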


We remark here that different from $C = F \Lambda F^*$ (i.e., $C$ is factorized into the product of $3$ complex matrices), Theorems \ref{realform1} - \ref{realform2} tell us that both $C$ and $S$ can be factorized into the products of  $3$ real matrices, respectively. This fact allows us to fast calculate matrix-vector multiplication and solve $C\boldsymbol{x} = \boldsymbol{b}$ and $S\boldsymbol{x} = \boldsymbol{b}$ by only real operations. In the next two sections, we will derive this strategy.


%
%
%
\section{\normalsize\bf  Fast Matrix-vector Multiplication}
In this section, we first reduce the matrices $U$ and $\tilde{U}$ into simpler forms by exploiting the structures of $U$ and $\tilde{U}$, then show how to fast compute $\Omega$ in (\ref{cirlschur1}) or (\ref{cirlschur2})  and $\Sigma$ in (\ref{skewschur1}) or (\ref{skewschur2}), and finally develop fast algorithms for computing $C\boldsymbol{x}$, $S\boldsymbol{x}$ and $T\boldsymbol{x}$.

Recall Definition \ref{vectorsym}, $P_{1n}\hat{\boldsymbol{c}}_k$ (remove the first entry of $\hat{\boldsymbol{c}}_k$) is a symmetric vector and $P_{1n}\hat{\boldsymbol{s}}_k$ (remove the first entry of $\hat{\boldsymbol{s}}_k$) is a skew-symmetric vector. Notice that if we delete the first row of $U$, then the first $m+1$ columns of the submatrix are all  symmetric, and the last columns are all skew-symmetric. Therefore, the $U$ of (\ref{thisU}) can be partitioned into the following form

 \begingroup
\renewcommand*{\arraystretch}{1.2}
\setlength{\arraycolsep}{0.9pt}
$
U_{2m}=
  \left[
  \begin{array}{ccc}
  \sigma_1 \boldsymbol{q}_{m+1}^T  & &\textbf{0}     \\[0.1cm]
  \hat{\mathscr{C}}                & &-\mathscr{S}_{m-1}^{\rm I}  J_{m-1}      \\[0.1cm]
  \sigma_1 \boldsymbol{v}_{m+1}^T  & &\textbf{0}  \\[0.1cm]
  J_{m-1}\hat{\mathscr{C}}         & &J_{m-1}\mathscr{S}_{m-1}^{\rm I}  J_{m-1}
  \end{array}
  \right]
 $ and
   \begingroup
\renewcommand*{\arraystretch}{0.9}
\setlength{\arraycolsep}{0.9pt}
$
U_{2m+1}=
  \left[
   \begin{array}{cc}
    \sigma_1 \boldsymbol{p}_{m+1}^T    & \textbf{0} \\[0.1cm]
    \hat{\mathscr{C}}                  & -\mathscr{S}_{m}^{\rm V}  J_m \\[0.1cm]
    J_m\hat{\mathscr{C}}               & J_m\mathscr{S}_{m}^{\rm V}  J_m
  \end{array}
  \right]$\\
where
$\sigma_1 = \sqrt{\frac{2}{n}}$, \
$\sigma_2 =\frac{1}{\sqrt{2}}$,\
$\boldsymbol{p}_{m+1}=(\frac{1}{\sqrt{2}}, 1, \cdots, 1)^T$,\
$\boldsymbol{q}_{m+1}=(\frac{1}{\sqrt{2}}, 1, \cdots, 1, \frac{1}{\sqrt{2}})^T$,\
$\boldsymbol{v}_{m+1} =(\frac{1}{\sqrt{2}}, -1, \cdots, (-1)^{m-1}, \frac{(-1)^{m}}{\sqrt{2}})^T$,
and
$$ \hat{\mathscr{C}}=\left\{
 \begin{array}{ll}
  \sigma_2  P_{1,m} \mathscr{C}_{m+1}^{\rm I}\in \reals^{(m-1) \times (m+1)},
 &\quad {\mbox{\rm if}} \ \ n=2m,\\[0.1cm]
  \sigma_2 P_{1,m+1} \mathscr{C}_{m+1}^{\rm V} \ \in \reals^{m \times (m+1)},
 & \quad {\mbox{\rm if}} \ \  n=2m+1.
 \end{array}
\right.$$
Similarly, the $\tilde{U}$ of (\ref{thisV}) can be partitioned into the following form
   \begingroup
\renewcommand*{\arraystretch}{1.1}
$$
\begin{array}{ccc}
\tilde{U}_{2m}=
   \left[
   \begin{array}{cc}
    \sigma_1\boldsymbol{e}^T   & \textbf{0} \\[0.1cm]
    \tilde{\mathscr{C}}        & \mathscr{S}_{m}^{\rm II} J_m \\[0.1cm]
   \textbf{0}                  & \sigma_1 \boldsymbol{u}_{m}^T J_m\\[0.1cm]
    -J_{m-1}\tilde{\mathscr{C}}&  J_{m-1}\mathscr{S}_{m}^{\rm II} J_m
  \end{array}
  \right]
  & {\mbox{\rm and }} &
\tilde{U}_{2m+1} =
   \left[
   \begin{array}{cc}
    \sigma_1\boldsymbol{p}_{m+1}^T & \textbf{0} \\[0.1cm]
    \tilde{\mathscr{C}}            & \mathscr{S}_{m}^{\rm VI} J_m \\[0.1cm]
    -J_m\tilde{\mathscr{C}}        & J_m\mathscr{S}_{m}^{\rm VI} J_m
  \end{array}
  \right ]
\end{array}
$$
\endgroup
where $\boldsymbol{u}_{m}=(1, -1, \cdots,(-1)^{m-1})^T$ and
$$
\tilde{\mathscr{C}}=
\left\{
 \begin{array}{ll}
\sigma_2P_{1m}\mathscr{C}_{m}^{\rm II}\ \in \reals^{(m-1)\times m}, & \quad {\mbox{\rm if}}\ \ n=2m,\\[0.1cm]
\sigma_2P_{1m}\mathscr{C}_{m+1}^{\rm VI}\ \in \reals^{m\times (m+1)}, & \quad {\mbox{\rm if}}\ \ n=2m+1.\\
 \end{array}
\right.
$$
Now we construct an orthogonal matrix $Q$ of the form
\begin{equation}\label{Q}
  Q = \left\{
  \begin{array}{ll}
       \frac{1}{\sqrt{2}}
       \left[
       \begin{array}{cccc}
        \sqrt{2} &         &          &        \\
                 & I_{m-1} &          & J_{m-1}  \\
                 &         & \sqrt{2} &         \\
                 & -J_{m-1}&          & I_{m-1}
        \end{array}
        \right],
& {\mbox{\rm if }}\   n=2m,\\ \\
     \frac{1}{\sqrt{2}}
     \left[
     \begin{array}{ccc}
        \sqrt{2} &      &      \\
                 & I_m  & J_m  \\
                 & -J_m & I_m
        \end{array}
        \right],
&  {\mbox{\rm if }}\  n=2m+1.
  \end{array}  \right.
\end{equation}
Then we can get the following simple formulas.
\begin{theorem}\label{QU}
Let $U$, $\tilde{U}$ and $Q$ be defined as in {\rm(\ref{thisU})}, {\rm(\ref{thisV})} and {\rm(\ref{Q})}, respectively. Then we have
\begin{equation}\label{U}
QU = \left\{
\begin{array}{ll}
\left[
     \begin{array}{cc}
         \mathscr{C}_{m+1}^{\rm I} &  \\
                     & J_m \mathscr{S}_{m-1}^{\rm I} J_m \\
    \end{array}
  \right],
& {\mbox{\rm if}} \ \  n=2m,\\ \\
\left[
       \begin{array}{cc}
         \mathscr{C}_{m+1}^{\rm V} &  \\
                        & J_m \mathscr{S}_m^{\rm V} J_m \\
       \end{array}
\right],
& {\mbox{\rm if}} \ \ n=2m+1.
\end{array}
\right.
\end{equation}
and
\begin{equation}\label{V}
Q^T\tilde{U} = \left\{
\begin{array}{ll}
 \left[
  \begin{array}{cc}
         \mathscr{C}_{m}^{\rm II} &  \\
                        & J_m\mathscr{S}_{m}^{\rm II} J_m \\
       \end{array}
       \right],
& {\mbox{\rm if}} \ \  n=2m.\\ \\
      \left[
      \begin{array}{ll}
         \mathscr{C}_{m+1}^{\rm VI} &  \\
                          & J_m\mathscr{S}_{m}^{\rm VI}  J_m \\
       \end{array}
       \right],
&  {\mbox{\rm if}} \ \ n=2m+1.
\end{array}
 \right.
\end{equation}
\end{theorem}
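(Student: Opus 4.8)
The plan is to establish (\ref{U}) and (\ref{V}) by one mechanical block computation, using nothing beyond the partitioned expressions for $U$ and $\tilde U$ displayed just above, the explicit $Q$ of (\ref{Q}), and the elementary facts $J_k^{2}=I_k$, $J_k^{T}=J_k$. The key structural remark is that the row partition of $Q$ --- blocks of sizes $1,m-1,1,m-1$ when $n=2m$ and $1,m,m$ when $n=2m+1$ --- is chosen to match the row partition of $U$ (and of $\tilde U$), so $QU$ and $Q^{T}\tilde U$ can be assembled one block-row at a time. There are four cases ($QU$ and $Q^{T}\tilde U$, for each parity of $n$), and all four run identically.

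For $QU$ with $n=2m$ I would argue as follows. Every column of $U_{2m}$ is of one of two types: a column built from a (scaled) $\hat{\boldsymbol c}_k$ has the $4$-block shape $(a,\boldsymbol v,b,J_{m-1}\boldsymbol v)^{T}$ --- this is precisely what $\hat{\boldsymbol c}_k=\hat{\boldsymbol c}_{n-k}$ and the symmetry of $P_{1n}\hat{\boldsymbol c}_k$ record --- while a column built from a (scaled) $\hat{\boldsymbol s}_k$ has shape $(0,\boldsymbol w,0,-J_{m-1}\boldsymbol w)^{T}$, since such a vector vanishes in its first and middle entries and is skew-symmetric between them. Multiplying by $Q$ and cancelling with $J_{m-1}^{2}=I_{m-1}$ gives
\[
Q\,(a,\boldsymbol v,b,J_{m-1}\boldsymbol v)^{T}=(a,\sqrt2\,\boldsymbol v,b,\mathbf 0)^{T},\qquad
Q\,(0,\boldsymbol w,0,-J_{m-1}\boldsymbol w)^{T}=(0,\mathbf 0,0,-\sqrt2\,J_{m-1}\boldsymbol w)^{T},
\]
so $Q$ folds the cosine columns onto the first $m+1$ rows and the sine columns onto the last $m-1$ rows; this already puts $QU_{2m}$ in the block-diagonal form of (\ref{U}). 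It remains to identify the two diagonal blocks. In the leading $(m+1)\times(m+1)$ block the interior rows are $\sqrt2\,\hat{\mathscr C}=\sqrt2\,\sigma_2 P_{1,m}\mathscr C_{m+1}^{\rm I}=P_{1,m}\mathscr C_{m+1}^{\rm I}$ (since $\sqrt2\,\sigma_2=1$), while rows $0$ and $m$ are $\sigma_1\boldsymbol q_{m+1}^{T}$ and $\sigma_1\boldsymbol v_{m+1}^{T}$; checking these against Definition \ref{DCT} --- in particular that $\sigma_1\boldsymbol q_{m+1}^{T}$ and $\sigma_1\boldsymbol v_{m+1}^{T}$ are exactly the rows $0$ and $m$ of $\mathscr C_{m+1}^{\rm I}$, the ones carrying the half-weights $\tau_0=\tau_m=1/\sqrt2$ --- shows the leading block equals $\mathscr C_{m+1}^{\rm I}$. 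The parallel computation on the $\hat{\boldsymbol s}$-columns yields $J_{m-1}\mathscr S_{m-1}^{\rm I}J_{m-1}$ for the trailing block.

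The other three cases follow the same script. For $QU$ with $n=2m+1$ one uses the $3$-block forms of $Q$ and $U_{2m+1}$, and the folding computation produces $\mathscr C_{m+1}^{\rm V}$ (assembled from $\sigma_2 P_{1,m+1}\mathscr C_{m+1}^{\rm V}$ and the boundary row $\sigma_1\boldsymbol p_{m+1}^{T}$) together with $J_m\mathscr S_m^{\rm V}J_m$. For $Q^{T}\tilde U$ one repeats everything with $Q^{T}$ in place of $Q$ --- $Q^{T}$ has its off-diagonal $I$ and $\pm J$ blocks interchanged, which is exactly what the row structure of $\tilde U$ calls for --- using the partitioned forms of $\tilde U_{2m}$, $\tilde U_{2m+1}$ and the relations $\tilde{\boldsymbol c}_k=\tilde{\boldsymbol c}_{n-k-1}$, $\tilde{\boldsymbol s}_k=-\tilde{\boldsymbol s}_{n-k-1}$; here the folded blocks come out as $\mathscr C_m^{\rm II}$ and $J_m\mathscr S_m^{\rm II}J_m$ when $n=2m$, and as $\mathscr C_{m+1}^{\rm VI}$ and $J_m\mathscr S_m^{\rm VI}J_m$ when $n=2m+1$.

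The argument is not deep; the one place demanding care is the bookkeeping. One must verify that the scalar rows (and, in the $\tilde U$ cases, columns) that sit outside the $\hat{\mathscr C}$ and $\tilde{\mathscr C}$ blocks --- $\sigma_1\boldsymbol p_{m+1}^{T}$, $\sigma_1\boldsymbol q_{m+1}^{T}$, $\sigma_1\boldsymbol v_{m+1}^{T}$ and their $\tilde U$-analogues --- are exactly the boundary rows or columns of the relevant DCT/DST matrices, and that the normalizations $\sigma_1=\sqrt{2/n}$, $\sigma_2=1/\sqrt2$, together with the $\sqrt2$ scalings built into $U$ and $\tilde U$, are calibrated so that the reinforcing contributions in each block-row collapse onto the bare transform submatrix (the factor $\sqrt2\,\sigma_2=1$) while the complementary ones cancel. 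This same bookkeeping --- in particular the index relabelling that turns the folded eigenvector sub-blocks into the precise $\tau$- and $\iota$-weighted cosine/sine matrices of Definition \ref{DCT} --- is what dictates which of the versions DCT/DST-I, II, V, VI appears, according to the parity of $n$ and to whether one is folding cosine- or sine-type columns.
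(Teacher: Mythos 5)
Your proof is correct and follows exactly the route the paper intends: the paper dismisses this as a ``tedious straightforward calculation'' and omits it, and your block-by-block folding argument (matching the $1,m-1,1,m-1$ resp.\ $1,m,m$ partitions of $Q$ and $U$, using $J^2=I$ to cancel/reinforce, and checking that $\sqrt2\,\sigma_2=1$ and that the boundary rows reproduce the $\tau$-weighted rows of the DCT matrices) is precisely that calculation, carried out. No gaps.
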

\begin{proof}
The proof of this theorem can be completed by a tedious straightforward calculation and thus omitted.
\end{proof}

The Theorem \ref{QU} provides us a fast way to the calculation of $U\boldsymbol{x}$, it can be obtained by $1$ DCT-I of $(m+1)$-vector and $1$ DST-I of $(m-1)$-vector if $n=2m$,  and $1$ DCT-V of $(m+1)$-vector and $1$ DST-V of $m$-vector if $n=2m+1$. The product $\tilde{U}\boldsymbol{x}$ can be obtained by a similar mode by employing the second and sixth versions of DCT and DST.

The entries of $\Omega$ and $\Sigma$ can be computed by
\begin{equation}\label{omega-sigama}
  \Omega U^T \boldsymbol{e}_1=(QU)^TQC\boldsymbol{e}_1 \ \ {\rm{and}}\ \
  \Sigma \tilde{U}^T \boldsymbol{e}_1=(Q^T\tilde{U})^TQ^TS\boldsymbol{e}_1,
\end{equation}
where $\boldsymbol{e}_1=(1, 0, \cdots,0)^T$.

The left-hand sides of (\ref{omega-sigama}) are as follows, respectively,
   \begingroup
\renewcommand*{\arraystretch}{1.1}
\begin{equation}\label{left-omega}
\sqrt{n}\Omega U^T \boldsymbol{e}_1=\left\{
\begin{array}{ll}
 (\frac{\alpha_0}{\sqrt{2}}, \alpha_1, \cdots,  \alpha_{m-1}, \frac{\alpha_m}{\sqrt{2}}, -\beta_{m-1}, \cdots, -\beta_1 )^T, & n=2m,\\[0.15cm]
  (\frac{\alpha_0}{\sqrt{2}}, \alpha_1, \cdots,  \alpha_m, -\beta_m, \cdots, -\beta_1)^T, &n=2m+1 ,
\end{array}
\right.
\end{equation}
\endgroup
and
   \begingroup
\renewcommand*{\arraystretch}{1.1}
\begin{equation}\label{left-sigama}
\sqrt{n}\Sigma \tilde{U}^T\boldsymbol{e}_1
=\left\{
\begin{array}{ll}
  (\tilde{\alpha}_0, \cdots,  \tilde{\alpha}_{m-1}, -\tilde{\beta}_{m-1}, \cdots, -\tilde{\beta}_0 )^T, & n=2m,\\[0.15 cm]
  (\tilde{\alpha}_0, \cdots, \tilde{\alpha}_{m-1},  \frac{\tilde{\alpha}_m}{\sqrt{2}}, -\tilde{\beta}_{m-1}, \cdots, -\tilde{\beta}_0)^T, &n=2m+1.
\end{array}
\right.
\end{equation}
\endgroup
This means we only need $1$ DCT and $1$ DST of about $\frac{n}{2}$-vector to get $\Omega$ or $\Sigma$.

Now, we show the calculations of $C\boldsymbol{x}$ and $S\boldsymbol{x}$ for  any real $n$-vector $\boldsymbol{x}$ using DCT and DST.

 According to Theorem \ref{realform1} and Theorem \ref{QU}, $C\boldsymbol{x}$ can be easily obtained by three DSTs and three DCTs (version I or V) of about $\frac{n}{2}$-vector. As for the storage required, we need one temporary $n$-vector and an extra $n$-vector for storing $\Omega$. In fact, we don't need to compute and store $U$ and $\Omega$ explicitly.
It can be written as the following Algorithm \ref{alg1}.
\begin{algorithm}[!htbp]
\caption{ To calculate $C\boldsymbol{x}$ }\label{alg1}
\begin{algorithmic}[1] 
\STATE Compute $\boldsymbol{v}=Q\boldsymbol{c}_1$ directly.
\STATE Compute $\hat{\boldsymbol{v}}=(QU)^T\boldsymbol{v}$ by DCT and DST.
\STATE Form $\Omega$.
\STATE Compute $\boldsymbol{y}_1=Q\boldsymbol{x}$ directly.
\STATE Compute $\boldsymbol{y}_2=(QU)^T\boldsymbol{y}_1$ by DCT and DST.
\STATE Compute $\boldsymbol{y}_3=\Omega\boldsymbol{y}_2$ directly.
\STATE Compute $\boldsymbol{y}_4=(QU)\boldsymbol{y}_3$ by DCT and DST.
\STATE Compute $Q^T\boldsymbol{y}_4$, i.e., $C\boldsymbol{x}$.
\end{algorithmic}
\end{algorithm}

If we compute $C\boldsymbol{x}$ by (\ref{schur}), it requires three FFTs of $n$-vector, and one temporary complex $n$-vector and an extra complex $n$-vector for storing $\Lambda$, equivalently, two temporary real $n$-vectors and two extra real $n$-vectors for storing $\Lambda$.

Similarly, according to Theorem \ref{realform2} and Theorem \ref{QU}, we develop the following Algorithm \ref{alg2} for computing the product $S\boldsymbol{x}$,  which can be obtained by three DSTs and three DCTs (version II or VI) of about $\frac{n}{2}$-vector.
\begin{algorithm}[!htbp]
\caption{ To calculate $S\boldsymbol{x}$}\label{alg2}
\begin{algorithmic}[1] 
\STATE Compute $\boldsymbol{u}=Q^T\boldsymbol{s}_1$ directly.
\STATE Compute $\hat{\boldsymbol{u}}=(Q^T\tilde{U})^T\boldsymbol{u}$ by DCT and DST.
\STATE Form $\Sigma$.
\STATE Compute $\boldsymbol{z}_1=Q^T\boldsymbol{x}$ directly.
\STATE Compute $\boldsymbol{z}_2=(Q^T\tilde{U})^T\boldsymbol{z}_1$ by DCT and DST.
\STATE Compute $\boldsymbol{z}_3=\Sigma\boldsymbol{z}_2$ directly.
\STATE Compute $\boldsymbol{z}_4=(Q^T\tilde{U})\boldsymbol{y}_3$ by DCT and DST.
\STATE Compute $Q\boldsymbol{z}_4$, i.e., $S\boldsymbol{x}$.
\end{algorithmic}
\end{algorithm}
\par From (\ref{CSsplitting}), we have that a Toeplitz matrix-vector multiplication $T\boldsymbol{x} = C\boldsymbol{x} + S\boldsymbol{x}$ can be fast calculated by employing the Algorithms \ref{alg1} - \ref{alg2}.

%
%
%
%
%
\section{\normalsize\bf Solving $T\boldsymbol{x} = \boldsymbol{b}$ by the CSCS iteration}
Consider the iterative solution to a large scale system of linear equations
\begin{equation}\label{Equation}
  T\boldsymbol{x}=\boldsymbol{b},
\end{equation}
where $T\in \reals^{n \times n}$ is a Toeplitz matrix and $ \boldsymbol{b} \in \reals^{n}$.

Based on the splitting (\ref{CSsplitting}), Ng proposed in \cite{Ng03} the following CSCS iteration for solving (\ref{Equation}).

 {\bf The CSCS iteration: }{\it Given an initial guess $\boldsymbol{x}^{(0)}$, for $k=0, 1,  \cdots, $ until
$\{\boldsymbol{x}^{(k)}\}$ converges, compute
\begin{equation}\label{CSequation}
\left\{
\begin{aligned}
(\theta I+C)\boldsymbol{x}^{(k+\frac{1}{2})}&=(\theta I-S)\boldsymbol{x}^{(k)}+\boldsymbol{b},\\[0.2cm]
(\theta I+S)\boldsymbol{x}^{(k+1)}&=(\theta I-C)\boldsymbol{x}^{(k+\frac{1}{2})}+\boldsymbol{b},
\end{aligned}
\right.
\end{equation}
where $\theta$ is a given positive constant.}

It is shown in \cite{Ng03} that the CSCS iteration converges unconditionally, if both $C$ and $S$ are positive definite.

Then applying (\ref{schur}) to (\ref{CSequation}), we get

{\bf The FFT version of CSCS iteration: }{\it Given an initial guess $\boldsymbol{x}^{(0)}$, for $k=0, 1, \cdots $, until $\{\boldsymbol{x}^{(k)}\}$ converges, compute
\begin{equation}\label{CSfourier}
\left\{\begin{aligned}
F(\theta I+\Lambda)F^*\boldsymbol{x}^{(k+\frac{1}{2})}
&=\tilde{F}(\theta I-\tilde{\Lambda})\tilde{F}^*\boldsymbol{x}^{(k)}+\boldsymbol{b},\\[0.25cm]
\tilde{F}(\theta I+\tilde{\Lambda})\tilde{F}^*\boldsymbol{x}^{(k+1)}
&=F(\theta I-\Lambda)F^*\boldsymbol{x}^{(k+\frac{1}{2})}+\boldsymbol{b}.
\end{aligned}\right.
\end{equation}
where $\theta$ is a given positive constant.}

In the preparatory stage, two FFTs of $n$-vector for computing $\Lambda$ and $\tilde{\Lambda}$ are required. In the iterative stage, six FFTs of $n$-vector for solving (\ref{CSfourier}) are needed. Therefore the computational complexity is $O(n\log n)$ complex flops at each iteration. However, all operations, due to FFTs, are involved into complex arithmetics, even if $T$ and $\boldsymbol{b}$ are real.

In this section, we develop the  DCT-DST version of (\ref{CSequation}) based on the DCT and DST. Also, we compare the computational cost of our version with the FFT version of the CSCS iteration (\ref{CSfourier}).

Note by Theorem \ref{realform1} and Theorem \ref{realform2} that the CSCS iteration (\ref{CSequation}) can be reformulated as the following form.
\begin{equation}\label{CSschur}
   \left\{
   \begin{aligned}
   U(\theta I+\Omega)U^T\boldsymbol{ x}^{(k+\frac{1}{2})}=&   \tilde{U}(\theta I-\Sigma)\tilde{U}^T\boldsymbol{x}^{(k)}+\boldsymbol{b},\\[0.25cm]
 \tilde{U}(\theta I+\Sigma)\tilde{U}^T \boldsymbol{x}^{(k+1)}=& U(\theta I-\Omega)U^T\boldsymbol{x}^{(k+\frac{1}{2})}+\boldsymbol{b}.
   \end{aligned}
   \right.
\end{equation}
The equation (\ref{CSschur}) can be further reduced into a simpler form due to Theorem \ref{QU}. For example, consider the case $n=2m$ (The odd case is similar to the even case), we have the following version,

 {\bf  The DCT-DST version of CSCS iteration: }{\it Given an initial guess $\boldsymbol{x}^{(0)} \in \reals^n $,  compute $\boldsymbol{x}^{(k)}$,  for $k=0, 1, \cdots $, until $\{\boldsymbol{x}^{(k)}\}$ converges:
 \begingroup
\renewcommand*{\arraystretch}{0.9}
\setlength{\arraycolsep}{0.9pt}
\begin{equation}\label{CSschur_q}
   \left\{
   \begin{aligned}
   &Q^T\left[
     \begin{array}{cc}
       \mathscr{C}_{m+1}^{\rm I} &  \\
        & \mathscr{S}_{m-1}^{\rm I} \\
     \end{array}
   \right]
   (\theta I+\Omega)
   \left[
     \begin{array}{cc}
       \mathscr{C}_{m+1}^{\rm I}& \\
       &\mathscr{S}_{m-1}^{\rm I} \\
     \end{array}\right]
     (Q\boldsymbol{x}^{(k+\frac{1}{2})})\\
   \ =\ &Q\left[
     \begin{array}{cc}
       \mathscr{C}_{m}^{\rm II} &  \\
        & \mathscr{S}_{m}^{\rm II} \\
     \end{array}
   \right]
   (\theta I-\Sigma)
   \left[
     \begin{array}{cc}
       \mathscr{C}_{m}^{\rm II} &  \\
        & \mathscr{S}_{m}^{\rm II} \\
     \end{array}\right]^T(Q^T\boldsymbol{x}^{(k)})+\boldsymbol{b},
\\[0.25cm]
   &Q\left[
     \begin{array}{cc}
       \mathscr{C}_{m}^{\rm II} &  \\
        & \mathscr{S}_{m}^{\rm II} \\
     \end{array}
   \right]
   (\theta I+\Sigma)
   \left[
     \begin{array}{cc}
       \mathscr{C}_{m}^{\rm II}&\\
       &\mathscr{S}_{m}^{\rm II} \\
     \end{array}
   \right]^T
   (Q^T\boldsymbol{x}^{(k+1)})\\
   \ =\ &Q^T\left[
     \begin{array}{cc}
       \mathscr{C}_{m+1}^{\rm I} &  \\
        & \mathscr{S}_{m-1}^{\rm I} \\
     \end{array}
   \right]
   (\theta I-\Omega)
   \left[
     \begin{array}{cc}
       \mathscr{C}_{m+1}^{\rm I}&\\
      & \mathscr{S}_{m-1}^{\rm I} \\
     \end{array}
     \right](Q\boldsymbol{x}^{(k+\frac{1}{2})})+\boldsymbol{b},
   \end{aligned}
   \right.
\end{equation}
\endgroup
where $\theta$ is a given constant.}

We emphasize here that our version has the same convergence rate and optimal parameter as the CSCS iteration does.

{\bf The computational complexity.}
The iteration (\ref{CSschur_q}) consists of the preparatory stage and computational stage. In preparatory stage, we only need to calculate the matrices $\Omega$ and $\Sigma$ which can be obtained by two DCTs and DSTs of about $n/2$-vector, see (\ref{omega-sigama}), (\ref{left-omega}) and (\ref{left-sigama}). In the computational stage, we need to compute the inverses of $\theta I + \Omega$ and $\theta I + \Sigma$, respectively.
Because the matrices $\theta I + \Omega$ and $\theta I + \Sigma$ are of special structure, their inverses also keep the same structure as the original matrices and can be easily obtained which only cost $O(n)$ flops.
The remaining operations are all matrix-vector multiplications. It takes $O(n)$ flops to calculate the products $Q\boldsymbol{v}$, $Q^T\boldsymbol{v}$, $(\theta I + \Omega)^{-1}\boldsymbol{v}$, $(\theta I - \Omega)\boldsymbol{v}$, $(\theta I + \Sigma)^{-1}\boldsymbol{v}$ and $(\theta I - \Sigma)\boldsymbol{v}$.
Therefore, the main cost of each iteration is six DCTs and six DSTs of about $n/2$-vectors.
As is well known, the complexity of DCT-I, DCT-II and DST-II of an $n$-vector is $\frac{1}{2}n\log n$ multiplications and $\frac{3}{2}n\log n$ additions, while DST-I of an $n$-vector requires $\frac{1}{2}n\log n$ multiplications and $2n\log n$ additions, see \cite{Britanak2007, Yip1988,  Martin1984}.
Thus the {\it computational complexity} of one iteration for solving (\ref{CSschur_q}) is $\frac{25}{2}n\log n$.
Note that to perform a FFT of an $n$-vector requires  $5n\log n$ flops, see, for example, \cite{GolVL96}. Therefore, the {\it computational complexity} of one iteration for solving (\ref{CSfourier}) is $30n\log n$. This means our method can save about half  operations as compared with the FFT version of the CSCS iteration.

%
%

\section{\normalsize\bf Numerical Examples}

All the numerical tests are done on a Founder desktop PC with quad-core Intel(R)  Core(TM) i7-4790 CPU 3.60 GHz with MATLAB 7.11.0(R2010b).

In all tests, we take the right-hand side $\boldsymbol{b}$ of (\ref{Equation}) to be $(1,  \cdots, 1)^T$ and the initial guess $\boldsymbol{x}^{(0)}$ to be the zero vector.
All tests are performed with double precision, and terminated when the current iterate satisfies
$\frac{\parallel \boldsymbol{r}^{(k)}\parallel_2}{\parallel \boldsymbol{r}^{(0)}\parallel_2}\leq 10^{-7}$,
or when the number of iterations is over 500,
where $\boldsymbol{r}^{(k)}$ is the residual vector of the system (\ref{Equation}) at the current iterate $\boldsymbol{x}^{(k)}$, and $\boldsymbol{r}^{(0)}$ is the initial one.

To show the effectiveness of our version (\ref{CSschur_q}), we give some comparisons of the elapsed CPU time among the iteration (\ref{CSschur_q}),  the FFT version of CSCS iteration (\ref{CSfourier}), and the AHSS iteration \cite{Fang2010} for solving real positive definite Toeplitz systems, whose generating functions are listed as follows.
\begin{example}\label{ex2}{\rm\cite{Ng03}}
$t_k=(1+\mid k\mid )^{-p}, k=0, \ \ \pm 1, \cdots\pm (n-1).$

\end{example}

\begin{example}\label{ex7}{\rm\cite{Fang2010}}
 $f(x)=5+x^2+2cos(3x)+i(x+sinx),\ x\in [-\pi, \pi]$.
\end{example}

\begin{example}\label{ex5}{\rm\cite{Fang2010}}
 $f(x)=10+8cosx+i2sin(5x),\ x\in [-\pi, \pi]$.
\end{example}

The generated Toeplitz matrix $T_n$ is symmetric positive definite in Example \ref{ex2}, and nonsymmetric positive definite in Examples \ref{ex7}-\ref{ex5}.
Therefore, all versions of the CSCS iteration are convergent unconditionally.

In all tables, we denote the order of the matrix
$T$, the parameter of the iteration, spectral radius of the corresponding iteration matrix, the number of iterations by $n$, $\theta$\
 \footnote{We emphasize here that the true  optimal parameter $\theta$ in the CSCS iteration is difficult to get, the parameters $\theta$ in all tables is experimentally approximately optimal.
 The parameters $\theta_1$  and $\theta_2$ in Table \ref{Table3} and \ref{Table4} are obtained from Corollary 4.1 in \cite{Fang2010}. },
$\rho$, $N$, respectively.
Also, we denote the elapsed CPU times ($10^{-2}$ seconds) of the FFT version of  CSCS iteration,  the DCT-DST version of CSCS iteration, the AHSS iteration based on FFT
by $t_s$, $t_r$ and $t_a$, respectively.

\begin{table}[htb]
\scriptsize
\caption{Comparison between (\ref{CSfourier}) and (\ref{CSschur_q}) for Example \ref{ex2} }\label{Table1}
\centering
\begin{tabular}{c cccc cccc}
\toprule
\multirow{2}{*}{\centering $n$} &
\multicolumn{4}{c}{ p=0.9}&
\multicolumn{4}{c}{ p=1.1}\\
\cmidrule(r){2-5}
\cmidrule(l){6-9}
 &$\theta$ &$N$ & $t_s$ &$t_r$ &
 $\theta$ &$N$ &  $t_s$ & $t_r$\\
\midrule
4000&1.985&21&58.576&34.222&1.465&14&42.548&24.320\\
6000&2.095&22&140.69&71.187&1.555&14&86.545&46.634\\
8000&2.175&22&227.67&124.07&1.545&14&152.99&83.585\\
\bottomrule
\end{tabular}
\end{table}
\begin{table}[htb]
\scriptsize
\caption{Comparison between (\ref{CSfourier}) and (\ref{CSschur_q})  for Examples \ref{ex7}-\ref{ex5}}\label{Table2}
\centering
\begin{tabular}{c  cccc cccc}
\toprule
 \multirow{2}{*}{\centering $n$}&
 \multicolumn{4}{c}{ Example \ref{ex7}}&
 \multicolumn{4}{c}{ Example \ref{ex5}}\\
\cmidrule(r){2-5}
\cmidrule(l){6-9}
&$\theta$ &$N$ &  $t_s$&$t_r$ &
$\theta$ &$N$ &  $t_s$ & $t_r$\\
\midrule
4000&3.680&5&17.133&10.108&3.890&9&29.907&18.628\\
6000&3.720&5&38.635&19.397&3.940&9&59.054&31.359\\
8000&3.705&5&66.787&32.603&3.925&8&95.722&49.814\\
\bottomrule
\end{tabular}
\end{table}
\begin{table}[htb]
\scriptsize
\caption{Comparison between (\ref{CSschur_q}) and AHSS iteration for Example \ref{ex7} }\label{Table3}
\centering
\begin{tabular}{c cccc cccc}
\toprule
 \multirow{2}{*}{\centering $n$}
 &\multicolumn{3}{c}{DCT-DST version}
 &\multicolumn{3}{c}{ AHSS iteration}\\
\cmidrule(r){2-4}
\cmidrule(l){5-7}
 &$(\theta, \rho)$ & $N$ & $t_r$&
$ (\theta_1, \theta_2, \rho)$  &$N$ &$t_a$ \\
\midrule
256   &(3.595, 0.1554)&6&1.7019&(7.1280, 7.1484, 0.2782)&9&2.0794 \\
512   &(3.765, 0.1656)&6&1.8638&(7.1444, 7.1550, 0.2813)&9&6.7190\\
1024 &(3.865, 0.1718)&6&2.4665&(7.1532, 7.1586, 0.2830)&9&32.539\\
\bottomrule
\end{tabular}
\end{table}
\begin{table}[htb]
\scriptsize
\caption{Comparison between (\ref{CSschur_q}) and AHSS iteration for Example \ref{ex5} }\label{Table4}
\centering
\begin{tabular}{c cccc cccc}
\toprule
 \multirow{2}{*}{\centering $n$}
 &\multicolumn{3}{c}{ DCT-DST version}
 &\multicolumn{3}{c}{ AHSS iteration}\\
\cmidrule(r){2-4}
\cmidrule(l){5-7}
 &$(\theta ,\rho)$ & $N$ & $t_r$ &
 $(\theta_1,\theta_2,\rho)$  &$N$ &$t_a$ \\
\midrule
256   &(3.585, 0.2806)&9&1.8882&(6.0035, 6.0005, 0.4916)&23&4.0687 \\
512   &(3.665, 0.2878)&9&2.0877&(6.0009, 6.0001, 0.4917)&23&11.418\\
1024 &(3.735, 0.2971)&9&2.8626&(6.0002, 6.0000, 0.4917)&23&70.218\\
\bottomrule
\end{tabular}
\end{table}

The  computational efficiency  of two versions of CSCS iteration for solving Examples \ref{ex2}- \ref{ex5}  is shown in the first two tables.
As mentioned in the previous section, the spectral radius  of iterative matrices of the two versions are the same, hence the numbers of iteration of them also remain the same.
From Tables \ref{Table1}-\ref{Table2}, we can see that when the order $n$ of the matrix $T$ becomes much larger, the iteration (\ref{CSschur_q}) works nearly twice as fast as the FFT version (\ref{CSfourier}).

Also, we give a comparison of  the computational efficiency  between the iteration (\ref{CSschur_q}) and the AHSS iteration\footnote{The AHSS iteration in \cite{Fang2010} involves two parameters and is faster than the HSS iteration proposed by Bai, et al. in \cite{Golub2003}. }
developed by Gu \cite{Gu2009} and Chen \cite{Fang2010} who have tailored the HSS iteration proposed in \cite{Golub2003} for real positive definite Toeplitz systems. Recall that any Toeplitz matrix $T$ admits a Hermitian and skew-Hermitian splitting \cite{Gu2009, Fang2010} $T=H +\tilde{S}$, where $H=\frac{1}{2}(T+T^*)$ and $\tilde{S}=\frac{1}{2}(T-T^*)$. Then the resulting HSS iteration for solving real positive definite Toeplitz system is

{\bf The HSS iteration: }{\it Given an initial guess $\boldsymbol{x}^{(0)}$, for $k=0, 1, \cdots $ until
$\{\boldsymbol{x}^{(k)}\}$ converges, compute
\begin{equation}\label{HSeqution}
\left\{
\begin{aligned}
(\theta I+H)\boldsymbol{x}^{(k+\frac{1}{2})}&=(\theta I-\tilde{S})\boldsymbol{x}^{(k)}+\boldsymbol{b},\\
(\theta I+\tilde{S})\boldsymbol{x}^{(k+1)}&=(\theta I-H)\boldsymbol{x}^{(k+\frac{1}{2})}+\boldsymbol{b},
\end{aligned}
\right.
\end{equation}
where $\theta$ is a given positive constant.}

Because $H$ is a symmetric Toeplitz matrix and $\tilde{S}$ is a skew-symmetric Toeplitz matrix, by using a unitary similarity transformation, each system of (\ref{HSeqution}) can be reduced into two subsystems with about half sizes. Thus, one need to solve three Toeplitz-plus-Hankel subsystems. The complexity of each iteration  is at least $O(n\log^2 n)$ if the superfast direct method is employed, and may be reduced to $O(\frac{n}{2}\log \frac{n}{2})$ if a preconditioned conjugate gradient method \cite{Fang2010} is used. However, a good preconditioner is not easy to get. Moreover, the employ of FFTs makes the complex operations be involved for real system (\ref{Equation}).

We remark here that the AHSS iteration does not work for symmetric case, Example \ref{ex7} and \ref{ex5} are used only. From the Tables \ref{Table3}-\ref{Table4}, we can see that the spectral radius of the corresponding iteration matrix of (\ref{CSschur_q}) is much smaller than that of the AHSS iteration.
Therefore, the number of iterations required of (\ref{CSschur_q})  is also less than that of the AHSS  iteration.
In particular, our iterative methods perform more efficiently for large $n$.

We must point out that in our tests, the functions \ $dct(\boldsymbol{x})$ and \ $dst(\boldsymbol{x})$, i.e.,  DCT-II and DST-I, are employed. In fact, in MATLAB, the DCT-II and DST-I are FFT-based algorithms for speedy computation. So the computational efficiency showed in practice is far lower than one in theoretical analysis, see the analysis of computational complexity in preceding sections.
In case the real \ $dct(\boldsymbol{x})$ and \ $dst(\boldsymbol{x})$  are implemented in MATLAB in the future, it can be expected that our method will behave more efficiently.

\section{\normalsize\bf Conclusion}
In this paper, by exploiting the special eigen-structure  of a real circulant matrix $C$ and a real skew-circulant matrix $S$,  we  get their real Schur forms. Then, by means of DCT-DST, we further reduce the orthogonal matrices $U$ and $\tilde{U}$ into simpler forms. As their applications, we first develop two  new fast algorithms for computing matrix-vector multiplications $C\boldsymbol{x}$ and $S\boldsymbol{x}$, where only real arithmetics are involved. Then we reformulate the CSCS iteration for $T\boldsymbol{x} = \boldsymbol{b}$, and get
the DCT-DST version of CSCS iteration (\ref{CSschur_q}), which only involves real arithmetics, is highly parallelizable and can be implemented on multiprocessors efficiently. Finally, some numerical examples are presented to show the reduction on the elapsed CPU times, compared with the iteration (\ref{CSfourier}) and the AHSS  iteration.
In fact, our method can save a half storage  and about half operations compared to the FFT version (\ref{CSfourier}).
Our proposed method shows obvious  advantage especially in high order cases.

\section{\normalsize\bf Acknowledgement}
The authors would like to thank the supports of the National Natural Science Foundation of China under Grant No. 11371075, the Hunan Key Laboratory of mathematical modeling and analysis in engineering, and the Portuguese Funds through FCT-Fundac$\tilde{a}$o para a Ci$\hat{e}$ncia, within the Project UID/ MAT/ 00013/2013.


\end{document}